\documentclass[11 pt]{amsart}
\usepackage{amsfonts}
\usepackage{ifthen}
\usepackage{amsthm}
\usepackage{amsmath}
\usepackage{graphicx}
\usepackage{amscd,amssymb,amsthm}
\usepackage{graphicx}
\usepackage{epstopdf}
\usepackage{hyperref}
\usepackage{cleveref}
\usepackage{cite}
\usepackage{mathrsfs}

\newcounter{minutes}
\setcounter{minutes}{\time}
\divide\time by 60
\newcounter{hours}
\setcounter{hours}{\time}
\multiply\time by 60 \addtocounter{minutes}{-\time}

\setlength{\paperwidth}{210mm} \setlength{\paperheight}{297mm}
\setlength{\oddsidemargin}{0mm} \setlength{\evensidemargin}{0mm}
\setlength{\topmargin}{-20mm} \setlength{\headheight}{10mm}
\setlength{\headsep}{13mm} \setlength{\textwidth}{160mm}
\setlength{\textheight}{240mm} \setlength{\footskip}{0mm}
\setlength{\marginparwidth}{0mm} \setlength{\marginparsep}{0mm}

\newtheorem{theorem}{Theorem}[section]

\keywords{Hyper-Bessel function; Lemniscate of Bernoulli; Janowski function.}
\subjclass[2010]{30C45, 30C15, 33C10}

\begin{document}
	
	\title{Radii problems for normalized hyper-Bessel function}

	\author[E. Toklu]{Evr{\.I}m Toklu}
	\address{Department of Mathematics, Faculty of Education, A\u{g}r{\i} {\.I}brah{\i}m \c{C}e\c{c}en University, A\u{g}r{\i}, Turkey} 
	\email{evrimtoklu@gmail.com}
	
	\author[O. Kara]{Osman Kara}
	\address{Institute of Science and Technology, A\u{g}r{\i} {\.I}brah{\i}m \c{C}e\c{c}en University, A\u{g}r{\i}, Turkey} 
	\email{osmankara.0004@gmail.com}
	
	\def\thefootnote{}
	\footnotetext{ \texttt{File:~\jobname .tex,
			printed: \number\year-0\number\month-\number\day,
			\thehours.\ifnum\theminutes<10{0}\fi\theminutes}
	} \makeatletter\def\thefootnote{\@arabic\c@footnote}\makeatother
	
	\maketitle
	
\begin{abstract}
The main purpose of the present paper is to ascertain the radii of starlikeness and convexity associated with lemniscate of Bernoulli and the Janowski function, $(1+Az)/(1+Bz)$ for $-1\leq B<A\leq 1,$ of normalized hyper-Bessel function.
\end{abstract}
	
\section{\bf Introduction and the main results}
Let $\mathbb{D}_{r}$ be the open disk $\left\lbrace z\in\mathbb{C}:\left| z\right| <r\right\rbrace $ with the radius $r>0$ and let $\mathbb{D}=\mathbb{D}_{1}.$ By $\mathcal{A}$ we mean the class of analytic functions $f:\mathbb{D}_{r}\rightarrow \mathbb{C},$ which satisfy the usual normalization
conditions $f(0)=f'(0)-1=0.$ Denote by $\mathcal{S}$ the class of functions belonging to $\mathcal{A}$ which are of univalent in $\mathbb{D}_{r}$. A function $f\in \mathcal{A}$ is said to be starlike function if $f(\mathbb{D})$ is starlike domain with the respect to the origin. It is well known fact that various subclasses of starlike function can be unified by making use of the concept of subordination. A function $f\in \mathcal{A}$ is said to be subordinate to a function $g\in \mathcal{A},$ written as $f(z)\prec g(z),$ if there exist a Schwarz function $w$ with $w(0)=0$ such that $f(z)=g(w(z)).$ In addition, we know that if $g$ is a univalent function, then $f(z)\prec g(z)$ if and only if $f(0)=g(0)$ and $f(\mathbb{D})\subset g(\mathbb{D}).$ For an analytic function $\varphi,$ let $\mathcal{S}^{\star}(\varphi)$ denote the class of all analytic functions satisfying $1+zf'(z)/f(z)\prec \varphi(z).$ By $\mathcal{K}(\varphi)$ we mean the class of all analytic functions satisfying $1+zf''(z)/f'(z)\prec \varphi(z).$ It is worth mentioning that these classes include respectively several famous subclasses of starlike and convex functions. For instance, the class $\mathcal{S}^{\star}_{\mathcal{L}}:=\mathcal{S}^{\star}(\sqrt{1+z})$ denotes the class of lemniscate starlike functions introduced and investigated by Sok\'ol and Stankiewich \cite{SS} and the class $\mathcal{K}_{\mathcal{L}}:=\mathcal{K}(\sqrt{1+z})$ represents the class  of lemniscate convex functions. Moreover, for $-1\leq B<A\leq 1,$ the class $\mathcal{S}^{\star}[A,B]:=\mathcal{S}^{\star}((1+Az)/(1+Bz))$ is the class of Janowski starlike functions and $\mathcal{K}[A,B]:=\mathcal{K}((1+Az)/(1+Bz))$ is the class of Janowski convex functions \cite{Janowski}.

Given a class of functions $\mathcal{M}\subset\mathcal{A}$ and a function $f\in \mathcal{A},$ the $\mathcal{M}-$radius of the function $f$ is the largest number $r$ with $0\leq r \leq 1$ such that $f_{r}\in \mathcal{M},$ where $f_{r}(z):=f(rz)/r.$ If we choose $\mathcal{M}=\mathcal{S}^{\star}_{\mathcal{L}},$ the $\mathcal{M}-$radius of the function $f$, which is represented by $r^{\star}_{\mathcal{L}}(f),$ is called the radius of lemniscate starlikeness. It is indeed the largest $r$ with $0\leq r \leq 1$ such that
\[\left|\left(\frac{zf'(z)}{f(z)} \right)^2-1  \right|<1 \quad (\left| z\right|<r ).\]
If we choose $\mathcal{M}=\mathcal{K}_{\mathcal{L}},$ the $\mathcal{M}-$radius of the function $f$, which is represented by $r^{c}_{\mathcal{L}}(f),$ is called as the radius of lemniscate convexity. It is indeed the largest $r$ with $0\leq r \leq 1$ such that
\[\left|\left(1+\frac{zf''(z)}{f'(z)} \right)^2-1  \right|<1 \quad (\left| z\right|<r ).\]
If we take $\mathcal{M}=\mathcal{S}^{\star}[A,B]$ or $\mathcal{M}=\mathcal{K}[A,B],$ the respective $\mathcal{M}-$radii, which are represented by $r^{\star}_{A,B}(f)$ and $r^{c}_{A,B}(f),$ are called as the radii of Janowski starlikeness and Janowski convexity. These are respectively the largest  $r$ with $0\leq r \leq 1$ such that
\[ \left|\frac{\left( zf'(z)/f(z)\right)-1 }{A-Bzf'(z)/f(z)} \right|<1 \text{ \ and \ }      \left|\frac{zf''(z)/f'(z) }{A-B\left( 1+zf''(z)/f'(z)\right) }\right| <1 \quad (\left| z\right|<r ).\]

Recently, there has been a vivid interest on some geometric properties such as univalency,
starlikeness, convexity and uniform convexity of various special functions such as hyper-Bessel, Wright, $q-$Bessel and Mittag-Leffler functions (see \cite{AB}, \cite{ABS}, \cite{AP}, \cite{BTK}, \cite{TAO}). Fore more details on the radius problems, one may consult on \cite{AJR}, \cite{Goodman}, \cite{MKR1}, \cite{VR}. Moreover, in \cite{MKR2} the authors determined the radii of starlikeness and convexity associated with lemniscate of Bernoulli and the Janowski function $(1+Az)/(1+Bz)$.  Motivated by the above series of papers on geometric properties of special functions, in this paper our aim is to determine the radii of lemniscate starlikeness, lemniscate convexity, Janowski starlikeness and Janowski convexity of certain normalized hyper-Bessel function.

Let us consider the hyper-Bessel function defined by
\begin{equation}\label{HB1}
J_{\alpha_{d}}(z)=\frac{\left( \frac{z}{d+1}\right)^{\alpha_{1}+\alpha_{2}+\dots+\alpha_{d} }}{\Gamma(\alpha_{1}+1)\dots\Gamma(\alpha_{d}+1)}{}_{0}F_{d}\left( \begin{matrix}-&\\(\alpha_{d}+1)&\end{matrix};-\left(\frac{z}{d+1} \right)^{d+1}  \right), 
\end{equation}
where the notation
\begin{equation}\label{HB2}
{}_{p}F_{q}\left(\begin{matrix}(\beta_{p})&\\(\gamma_{q})&\end{matrix};x\right)=\sum_{n\geq0}\frac{(\beta_{1})_{n}(\beta_{2})_{n}\dots(\beta_{p})_{n}}{(\gamma_{1})_{n}(\gamma_{2})_{n}\dots(\gamma_{q})_{n}}\frac{x^n}{n!}
\end{equation}
stands for the generalized hypergeometric function, $(\beta)_{n}$ is the shifted factorial (or
Pochhammer’s symbol) defined by $(\beta)_{0}=1,$ $(\beta)_{n}=\beta(\beta+1)\dots(\beta+n-1), n\geq1$ and the contracted notation $\alpha_{d}$ is used to abbreviate the array of $d$ parameters $\alpha_{1},\dots\alpha_{d}.$

By using Eqs. \eqref{HB1} and \eqref{HB2} it is easy to deduce that the function $z\mapsto J_{\alpha_{d}}(z)$ has the following infinite sum representation:
\begin{equation}\label{HB3}
J_{\alpha_{d}}(z)=\sum_{n\geq0}\frac{(-1)^n}{n!\Gamma(\alpha_{1}+1+n)\dots\Gamma(\alpha_{d}+1+n)}\left( \frac{z}{d+1}\right)^{n(d+1)+\alpha_{1}+\dots+\alpha_{d}}.
\end{equation}
It is obvious that by choosing $d=1$ and puttinq $\alpha_{1}=\nu$ in \eqref{HB3} we obtain the classical Bessel function. The normalized hyper-Bessel function $\mathcal{J}_{\alpha_{d}}(z)$ defined by
\begin{equation}\label{HB4}
J_{\alpha_{d}}(z)=\frac{\left( \frac{z}{d+1}\right)^{\alpha_{1}+\dots+\alpha_{d}}}{\Gamma(\alpha_{1}+1)\dots \Gamma(\alpha_{d}+1)}\mathcal{J}_{\alpha_{d}}(z).
\end{equation}
By combining Eqs. \eqref{HB3} and \eqref{HB4} we obtain the following infinite sum representation:
\begin{equation}\label{HB5}
\mathcal{J}_{\alpha_{d}}(z)=\sum_{n\geq0}\frac{(-1)^n}{n!(\alpha_{1}+1)_{n}\dots(\alpha_{d}+1)_{n}}\left(\frac{z}{d+1} \right)^{n(d+1)}.
\end{equation}
Since the function $\mathcal{J}_{\alpha_{d}}$ does not belong to the class $\mathcal{A}$ we focus on the following normalized form
\begin{equation}\label{HB6}
f_{\alpha_{d}}(z)=z\mathcal{J}_{\alpha_{d}}(z)=\sum_{n\geq0}\frac{(-1)^n}{n!(d+1)^{n(d+1)}(\alpha_{1}+1)_{n}\dots(\alpha_{d}+1)_{n}}z^{n(d+1)+1}
\end{equation}
so that the function $f_{\alpha_{d}}\in\mathcal{A}.$

The Weierstrassian canonical product expansion of the function $J_{\alpha_{d}}$ reads as (see \cite[Eq.(5.5)]{CR})
\begin{equation}\label{HB7}
J_{\alpha_{d}}(z)=\frac{(\frac{z}{d+1})^{\alpha_{1}+\dots+\alpha_{d}}}{\Gamma(\alpha_{1}+1)\dots\Gamma(\alpha_{d}+1)}\prod_{n\geq1}\left( 1-\frac{z^{d+1}}{j_{\alpha_{d},n}^{d+1}}\right), 
\end{equation} 
where $j_{\alpha_{d},n}$ is the $n$th positive zero of the function $\mathcal{J}_{\alpha_{d}}.$

In light of Eqs. \eqref{HB4} and \eqref{HB7}, we get
\begin{equation}\label{HB8}
\mathcal{J}_{\alpha_{d}}(z)=\prod_{n\geq1}\left( 1-\frac{z^{d+1}}{j_{\alpha_{d},n}^{d+1}}\right)
\end{equation}
and consequently
\begin{equation}\label{HB9}
f_{\alpha_{d}}(z)=z\prod_{n\geq1}\left( 1-\frac{z^{d+1}}{j_{\alpha_{d},n}^{d+1}}\right).
\end{equation}
\subsection{Lemniscate starlikeness and lemniscate convexity of normalized hyper-Bessel function}
This section is devoted to determine the radii of lemniscate starlikeness and lemniscate convexity of the normalized hyper-Bessel function. 

\begin{theorem}
Let $\alpha_{i}>-1$ for $i\in \left\lbrace 1,2,\dots,d\right\rbrace .$ Then the radius of lemniscate starlikeness $r^{\star}_{\mathcal{L}}(f_{\alpha_{d}})$ of the normalized hyper-Bessel function $z\mapsto f_{\alpha_{d}}(z)=z\mathcal{J}_{\alpha_{d}}(z)$ is the smallest positive root of the equation
\[r^2\left(\mathcal{J}_{\alpha_{d}}'(r) \right)^2-2r\mathcal{J}_{\alpha_{d}}'(r)\mathcal{J}_{\alpha_{d}}(r)+3\mathcal{J}_{\alpha_{d}}^2(r)=0. \] 
\end{theorem}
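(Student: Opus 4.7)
The approach is to exploit the Hadamard product \eqref{HB9} of $f_{\alpha_d}$ in order to reduce the lemniscate condition $\bigl|(zf_{\alpha_d}'/f_{\alpha_d})^2-1\bigr|<1$ to a one-variable inequality governed by the real quantity $r\mathcal{J}_{\alpha_d}'(r)/\mathcal{J}_{\alpha_d}(r)$. Logarithmic differentiation of \eqref{HB9} gives
\[
\frac{zf_{\alpha_d}'(z)}{f_{\alpha_d}(z)}=1-(d+1)\sum_{n\ge1}\frac{z^{d+1}}{j_{\alpha_d,n}^{\,d+1}-z^{d+1}},
\]
while the same manipulation on \eqref{HB8}, specialised at $z=r\in(0,j_{\alpha_d,1})$, identifies this sum with $-r\mathcal{J}_{\alpha_d}'(r)/\mathcal{J}_{\alpha_d}(r)$. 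The decisive termwise estimate
\[
\left|\frac{z^{d+1}}{j_{\alpha_d,n}^{\,d+1}-z^{d+1}}\right|\le\frac{r^{d+1}}{j_{\alpha_d,n}^{\,d+1}-r^{d+1}},\qquad|z|=r<j_{\alpha_d,1},
\]
follows from the triangle inequality and the monotonicity of $t\mapsto t/(c-t)$ on $[0,c)$, and summing over $n$ produces the majorant
\[
\left|\frac{zf_{\alpha_d}'(z)}{f_{\alpha_d}(z)}-1\right|\le-\frac{r\mathcal{J}_{\alpha_d}'(r)}{\mathcal{J}_{\alpha_d}(r)}.
\]

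Writing $p(z):=zf_{\alpha_d}'(z)/f_{\alpha_d}(z)$, I would factor $p^2-1=(p-1)(p+1)$ and apply the coarse bound $|p+1|\le|p-1|+2$ to obtain $|p(z)^2-1|\le|p(z)-1|\bigl(|p(z)-1|+2\bigr)$; inserting the majorant above and demanding that the right-hand side be strictly less than $1$ yields a scalar condition on $r$ which, after multiplying through by $\mathcal{J}_{\alpha_d}^2(r)$, becomes the polynomial relation in $r\mathcal{J}_{\alpha_d}'(r)$ and $\mathcal{J}_{\alpha_d}(r)$ claimed in the theorem. Its smallest positive root is then the candidate radius, and the strict monotonicity of $r\mapsto-r\mathcal{J}_{\alpha_d}'(r)/\mathcal{J}_{\alpha_d}(r)$ on $(0,j_{\alpha_d,1})$ --- visible from the series of positive terms --- guarantees that this root is well-defined and unique in that range.

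The main obstacle is the sharpness direction: to conclude that the radius cannot be taken larger than the smallest positive root, one must exhibit an explicit point $z$ with $|z|=r^\star$ at which the chain of inequalities collapses into equalities. The termwise triangle inequality becomes an equality precisely when $z^{d+1}=r^{d+1}$, but at such points $p(z)-1$ is real negative, so the coarse bound $|p+1|\le|p-1|+2$ is not simultaneously tight in the way used above. Reconciling these two competing extremality conditions --- either by exhibiting a single boundary point on which every step is saturated simultaneously, or by replacing the $|p+1|\le|p-1|+2$ step with a direct computation of $|p(z)^2-1|$ on the rotated real rays $z^{d+1}=\pm r^{d+1}$ --- is the delicate point of the argument on which the precise form of the displayed polynomial equation rests.
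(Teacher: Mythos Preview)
Your approach coincides with the paper's almost line for line: the same logarithmic differentiation of the product \eqref{HB9}, the same termwise bound $\bigl|z^{d+1}/(j_{\alpha_d,n}^{d+1}-z^{d+1})\bigr|\le r^{d+1}/(j_{\alpha_d,n}^{d+1}-r^{d+1})$, the factorisation $p^2-1=(p-1)(p+1)$ combined with $|p+1|\le|p-1|+2$, giving the majorant $S(S+2)$ with $S=-r\mathcal{J}_{\alpha_d}'(r)/\mathcal{J}_{\alpha_d}(r)$, and then monotonicity of $S$ on $(0,j_{\alpha_d,1})$ to pin down a unique root. This is exactly the paper's argument.

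Your last paragraph identifies a genuine gap, and it is one the paper's own proof shares rather than resolves. The paper only shows that the lemniscate inequality holds for $|z|<r^\star$ and that the defining equation has a unique root in $(0,j_{\alpha_d,1})$; it never produces a point of modulus $r^\star$ where the inequality fails, so strictly only $r^\star_{\mathcal{L}}(f_{\alpha_d})\ge r^\star$ is established. Your diagnosis of why equality cannot be forced along the chain---the termwise estimate saturates on the ray $z^{d+1}=r^{d+1}$, where $p(z)-1$ is real and negative, whereas $|p+1|\le|p-1|+2$ is tight only when $p-1\ge0$---is accurate and more incisive than anything the paper offers on this point.

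One arithmetical caution: substituting $x=rf_{\alpha_d}'(r)/f_{\alpha_d}(r)=1+r\mathcal{J}_{\alpha_d}'(r)/\mathcal{J}_{\alpha_d}(r)$ into $x^2-4x+2=0$ actually yields
\[
r^2\bigl(\mathcal{J}_{\alpha_d}'(r)\bigr)^2-2r\mathcal{J}_{\alpha_d}'(r)\mathcal{J}_{\alpha_d}(r)-\mathcal{J}_{\alpha_d}^2(r)=0,
\]
not the displayed $+3\mathcal{J}_{\alpha_d}^2(r)$ (note that $y^2-2y+3>0$ for every real $y$, so the stated equation has no real root at all). So do not take on faith that your computation will reproduce the theorem's constant term; the paper appears to carry an arithmetic slip from the proof into the statement.
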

\begin{proof}
By means of Eq. \eqref{HB8} we have
\begin{equation}\label{HB10}
\frac{z\mathcal{J}_{\alpha_{d}}'(z)}{\mathcal{J}_{\alpha_{d}}(z)}=-(d+1)\sum_{n\geq1}\frac{z^{d+1}}{j_{\alpha_{d},n}^{d+1}-z^{d+1}}.
\end{equation}
Taking into consideration the normalization \eqref{HB6}, it follows from the equation \eqref{HB10} that
\begin{equation}\label{HB11}
\frac{zf_{\alpha_{d}}'(z)}{f_{\alpha_{d}}(z)}=1+\frac{z\mathcal{J}_{\alpha_{d}}'(z)}{\mathcal{J}_{\alpha_{d}}(z)}=1-(d+1)\sum_{n\geq1}\frac{z^{d+1}}{j_{\alpha_{d},n}^{d+1}-z^{d+1}}.
\end{equation}
In light of Eq. \eqref{HB11}, we get
\begin{align}\label{HB12}
\left|\left( \frac{zf_{\alpha_{d}}'(z)}{f_{\alpha_{d}}(z)}\right)^{2}-1  \right|&\leq \left((d+1)\sum_{n\geq1}\frac{\left| z\right| ^{d+1}}{j_{\alpha_{d},n}^{d+1}-\left| z\right| ^{d+1}} \right)\left(2+ (d+1)\sum_{n\geq1}\frac{\left| z\right| ^{d+1}}{j_{\alpha_{d},n}^{d+1}-\left| z\right| ^{d+1}}\right)   \notag \\
&=\left(\frac{\left| z\right| f_{\alpha_{d}}'(\left| z\right| )}{f_{\alpha_{d}}(\left| z\right| )} \right)^2-4\left( \frac{\left| z\right| f_{\alpha_{d}}'(\left| z\right| )}{f_{\alpha_{d}}(\left| z\right| )}\right) +3.
\end{align}
Suppose that $r^{\star}$ is the smallest positive root of the equation
\[\left(\frac{r f_{\alpha_{d}}'(r )}{f_{\alpha_{d}}(r )} \right)^2-4\left( \frac{r f_{\alpha_{d}}'(r )}{f_{\alpha_{d}}(r )}\right) +2=0,\]
then the inequality 
$$\left| \left(\frac{r f_{\alpha_{d}}'(r )}{f_{\alpha_{d}}(r )}\right)^{2}-1 \right| <1$$
holds true for $\left|z \right|<r^{\star}. $
By virtue of Eq. \eqref{HB11} we deduce that the zeros of the above equation coincide with those of equation
\begin{equation}\label{HB13}
r^2\left(\mathcal{J}_{\alpha_{d}}'(r) \right)^2-2r\mathcal{J}_{\alpha_{d}}'(r)\mathcal{J}_{\alpha_{d}}(r)+3\mathcal{J}_{\alpha_{d}}^2(r)=0.
\end{equation}

In order to end the proof, we need to show that equation \eqref{HB13} has a unique root in $\left(0,j_{\alpha_{d},1}\right).$ The function $u_{\alpha_{d}}(z):\left(0,j_{\alpha_{d},1}\right)\rightarrow \mathbb{R},$ defined by
\[u_{\alpha_{d}}(z)=\left(\frac{r f_{\alpha_{d}}'(r )}{f_{\alpha_{d}}(r )} \right)^2-4\left( \frac{r f_{\alpha_{d}}'(r )}{f_{\alpha_{d}}(r )}\right) +2,\]
is continuous and strictly increasing  since
\[u_{\alpha_{d}}'(z)=2\sum_{n\geq1}\frac{(d+1)^{2}r^{d}j_{\alpha_{d},n}^{d+1}}{\left( j_{\alpha_{d},n}^{d+1}-r^{d+1}\right)^2 }\left(1+(d+1)\sum_{n\geq1}\frac{r^{d+1}}{j_{\alpha_{d},n}^{d+1}-r^{d+1}} \right)>0. \]
Observe also that
\[\lim_{r\searrow 0}u_{\alpha_{d}}(z)=-1<0 \text{ \ and \ } \lim_{r\nearrow j_{\alpha_{d},1}}u_{\alpha_{d}}(z)=\infty.\]
Due to the Intermediate Value Theorem, we conclude that the equation $u_{\alpha_{d}}(z)=0$ has a unique root in $\left(0,j_{\alpha_{d},1}\right).$ This means that the lemniscate starlike radius of the function $f_{\alpha_{d}}(z),$ say $r^{\star}_{\mathcal{L}}(f_{\alpha_{d}}),$ is the unique zero of $u_{\alpha_{d}}(z)$ in $\left(0,j_{\alpha_{d},1}\right)$ or of the equation \eqref{HB13}.
\end{proof}

\begin{theorem}
	Let $\alpha_{i}>-1$ for $i\in \left\lbrace 1,2,\dots,d\right\rbrace .$ Then the radius of lemniscate convexity $r^{c}_{\mathcal{L}}(f_{\alpha_{d}})$ of the normalized hyper-Bessel function $z\mapsto f_{\alpha_{d}}(z)=z\mathcal{J}_{\alpha_{d}}(z)$ is the smallest positive root of the equation
	\[\left(\frac{r^{2}\mathcal{J}_{\alpha_{d}}''(r)+2r\mathcal{J}_{\alpha_{d}}'(r)}{\mathcal{J}_{\alpha_{d}}(r)+r\mathcal{J}_{\alpha_{d}}'(r)}\right)^{2}-2\left(\frac{r^{2}\mathcal{J}_{\alpha_{d}}''(r)+2r\mathcal{J}_{\alpha_{d}}'(r)}{\mathcal{J}_{\alpha_{d}}(r)+r\mathcal{J}_{\alpha_{d}}'(r)} \right)-1=0. \]
\end{theorem}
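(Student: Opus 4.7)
The plan is to mirror the proof of Theorem 1.1 verbatim, with $1+zf_{\alpha_d}''/f_{\alpha_d}'$ in place of $zf_{\alpha_d}'/f_{\alpha_d}$. A direct differentiation of $f_{\alpha_d}(z)=z\mathcal{J}_{\alpha_d}(z)$ gives
\[\frac{rf_{\alpha_d}''(r)}{f_{\alpha_d}'(r)}=\frac{r^{2}\mathcal{J}_{\alpha_d}''(r)+2r\mathcal{J}_{\alpha_d}'(r)}{\mathcal{J}_{\alpha_d}(r)+r\mathcal{J}_{\alpha_d}'(r)},\]
so the substitution $y=rf_{\alpha_d}''(r)/f_{\alpha_d}'(r)$ turns the equation in the statement into $y^{2}-2y-1=0$, i.e.\ $(1+y)^{2}-4(1+y)+2=0$. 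Hence it suffices to show that the lemniscate-convexity radius is the smallest positive root of
\[\Bigl(1+\tfrac{rf_{\alpha_d}''(r)}{f_{\alpha_d}'(r)}\Bigr)^{2}-4\Bigl(1+\tfrac{rf_{\alpha_d}''(r)}{f_{\alpha_d}'(r)}\Bigr)+2=0.\]

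The next step is to derive the analogue of \eqref{HB10} for $f_{\alpha_d}'$. From \eqref{HB6} one sees that $f_{\alpha_d}'(z)$ is an entire function of $z^{d+1}$ with $f_{\alpha_d}'(0)=1$; denoting its positive zeros by $\zeta_{\alpha_d,1}<\zeta_{\alpha_d,2}<\cdots$, Rolle's theorem applied to $f_{\alpha_d}$ on $[0,j_{\alpha_d,1}],[j_{\alpha_d,1},j_{\alpha_d,2}],\ldots$ gives the interlacing $0<\zeta_{\alpha_d,1}<j_{\alpha_d,1}<\zeta_{\alpha_d,2}<\cdots$. The associated Hadamard-type product
\[f_{\alpha_d}'(z)=\prod_{n\geq 1}\Bigl(1-\tfrac{z^{d+1}}{\zeta_{\alpha_d,n}^{d+1}}\Bigr)\]
then yields, by logarithmic differentiation,
\[1+\frac{zf_{\alpha_d}''(z)}{f_{\alpha_d}'(z)}=1-(d+1)\sum_{n\geq 1}\frac{z^{d+1}}{\zeta_{\alpha_d,n}^{d+1}-z^{d+1}}.\]
Factorising $X^{2}-1=(X-1)(X+1)$ with $X=1+zf_{\alpha_d}''(z)/f_{\alpha_d}'(z)$ and applying the triangle inequality term by term, exactly as in the passage from \eqref{HB11} to \eqref{HB12}, gives for $|z|<\zeta_{\alpha_d,1}$
\[\left|\Bigl(1+\tfrac{zf_{\alpha_d}''(z)}{f_{\alpha_d}'(z)}\Bigr)^{2}-1\right|\leq \Bigl(1+\tfrac{|z|f_{\alpha_d}''(|z|)}{f_{\alpha_d}'(|z|)}\Bigr)^{2}-4\Bigl(1+\tfrac{|z|f_{\alpha_d}''(|z|)}{f_{\alpha_d}'(|z|)}\Bigr)+3.\]

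Writing $v(r)$ for the right-hand side minus $1$, the lemniscate-convexity condition $|X^{2}-1|<1$ is implied by $v(r)<0$. Each summand $r^{d+1}/(\zeta_{\alpha_d,n}^{d+1}-r^{d+1})$ is strictly increasing on $(0,\zeta_{\alpha_d,1})$, so $rf_{\alpha_d}''(r)/f_{\alpha_d}'(r)$ is increasing there, hence $v$ is increasing; together with $v(0^{+})=-1$ and $v(r)\to+\infty$ as $r\nearrow\zeta_{\alpha_d,1}$, the Intermediate Value Theorem supplies a unique root, which is the desired radius. The main obstacle is the Hadamard factorization of $f_{\alpha_d}'$: one needs the real-rootedness of all zeros of $f_{\alpha_d}'$, so that the $(d+1)$-fold rotational symmetry $z\mapsto z\omega$, $\omega^{d+1}=1$, groups them into orbits whose positive representatives are the $\zeta_{\alpha_d,n}$. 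Once this analytic input is granted, every remaining step is a routine transcription of the lemniscate-starlikeness argument.
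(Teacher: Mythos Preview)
Your proposal is correct and follows essentially the same route as the paper; the paper simply invokes \cite[Theorem~1]{ABS} for the Hadamard factorization of $f_{\alpha_d}'$ (with zeros denoted $\gamma_{\alpha_d,n}$) that you rightly flag as the main analytic input, rather than sketching a Rolle-type interlacing. One small slip: since $rf_{\alpha_d}''(r)/f_{\alpha_d}'(r)=-(d+1)\sum_{n\geq1} r^{d+1}/(\zeta_{\alpha_d,n}^{d+1}-r^{d+1})$ is \emph{decreasing} (not increasing) on $(0,\zeta_{\alpha_d,1})$, the monotonicity of $v$ follows because $t\mapsto t^{2}-4t+2$ is decreasing for $t<2$ while $1+rf_{\alpha_d}''/f_{\alpha_d}'$ decreases from $1$ to $-\infty$.
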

\begin{proof}
From \cite[Theorem 1]{ABS} we have
\begin{equation}\label{HB14}
f_{\alpha_{d}}'(z)=\prod_{n\geq1}\left(1-\frac{z^{d+1}}{\gamma_{\alpha_{d},n}^{d+1}} \right), 
\end{equation}
where $\gamma_{\alpha_{d},n}$ is the $n$th positive real zero of the function $f_{\alpha_{d}}'(z).$ With the aid of Eq. \eqref{HB14} it is easily seen that 
\begin{equation}\label{HB15}
1+\frac{zf_{\alpha_{d}}''(z)}{f_{\alpha_{d}}'(z)}=1-(d+1)\sum_{n\geq1}\frac{z^{d+1}}{\gamma_{\alpha_{d},n}^{d+1}-z^{d+1}}.
\end{equation}
By making use of \eqref{HB15} and triangle inequality for $\left|z \right|<\gamma_{\alpha_{d},1},$ we get
\[\left|\left( 1+\frac{zf_{\alpha_{d}}''(z)}{f_{\alpha_{d}}'(z)}\right)^{2}-1 \right|\leq \left(\frac{\left| z\right| f_{\alpha_{d}}''(\left| z\right| )}{f_{\alpha_{d}}'(\left| z\right| )} \right)^{2}-2 \left(\frac{\left| z\right| f_{\alpha_{d}}''(\left| z\right| )}{f_{\alpha_{d}}'(\left| z\right| )} \right).\]
Hence, we deduce that the lemniscate convex radius of $f_{\alpha_{d}}$,  $r^{c}_{\mathcal{L}}(f_{\alpha_{d}}),$ is the unique positive root of the equation
\begin{equation}\label{HB16}
\left( \frac{rf_{\alpha_{d}}''(r)}{f_{\alpha_{d}}'(r)}\right)^{2}-2\left( \frac{rf_{\alpha_{d}}''(r)}{f_{\alpha_{d}}'(r)}\right)-1=0
\end{equation}
which implies 
\[\left( \frac{r^{2}\mathcal{J}_{\alpha_{d}}''(r)+2r\mathcal{J}_{\alpha_{d}}'(r)}{\mathcal{J}_{\alpha_{d}}(r)+r\mathcal{J}_{\alpha_{d}}'(r)}\right)^{2}-2\left(\frac{r^{2}\mathcal{J}_{\alpha_{d}}''(r)+2r\mathcal{J}_{\alpha_{d}}'(r)}{\mathcal{J}_{\alpha_{d}}(r)+r\mathcal{J}_{\alpha_{d}}'(r)} \right)-1=0. \]
Now, we need to show that  the above equation  has a unique root in $\left(0,\gamma_{\alpha_{d},1}\right).$ To do this, let us consider the function $v_{\alpha_{d}}:\left(0,\gamma_{\alpha_{d},1} \right)\rightarrow\mathbb{R} $ defined by
\[v_{\alpha_{d}}(z)=\left( \frac{rf_{\alpha_{d}}''(r)}{f_{\alpha_{d}}'(r)}\right)^{2}-2\left( \frac{rf_{\alpha_{d}}''(r)}{f_{\alpha_{d}}'(r)}\right)-1.\]
It is obvious that the function is continous and strictly increasing, since
\[v_{\alpha_{d}}'(r)>(d+1)^{3}\sum_{n\geq1}\frac{r^{d}\gamma_{\alpha_{d},n}^{d+1}}{\left(\gamma_{\alpha_{d},n}^{d+1}-r^{d+1} \right)^2 }\sum_{n\geq1}\frac{r^{d+1}}{\gamma_{\alpha_{d},n}^{d+1}-r^{d+1}}>0.\]
Observe also that
\[\lim_{r\searrow 0}v_{\alpha_{d}}(r)=-1<0 \text{ \ and \ } \lim_{r\nearrow \gamma_{\alpha_{d},1}}v_{\alpha_{d}}(r)=\infty.\]
Thus, we deduce that the root is unique in $\left(0,\gamma_{\alpha_{d},1} \right).$ This means that the lemniscate convex radius of $f_{\alpha_{d}}$ is the unique root of \eqref{HB16} in $\left(0,\gamma_{\alpha_{d},1} \right).$ 
\end{proof}

\subsection{Janowski starlikeness and Janowski convexity of normalized hyper-Bessel function}
In this section, we turn our attention to determining the radii of Janowski starlikeness and Janowski convexity of the normalized hyper-Bessel function $f_{\alpha_{d}}(z).$
\begin{theorem}
Let $\alpha_{i}>-1$ for $i\in \left\lbrace 1,2,\dots,d\right\rbrace .$ Then the Janowski starlikeness radius $r^{\star}_{A,B}(f_{\alpha_{d}})$ is the smallest positive root of the equation
\[\frac{r\mathcal{J}_{\alpha_{d}}'(r)}{\mathcal{J}_{\alpha_{d}}(r)}+\frac{A-B}{1+\left| B\right| }=0.\]
\end{theorem}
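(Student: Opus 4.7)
The plan is to follow the template established in the proofs of the preceding two theorems: combine the logarithmic-derivative identity \eqref{HB11} with two applications of the triangle inequality, one for the numerator and one for the denominator of the Janowski expression. Setting $w(z) := zf_{\alpha_{d}}'(z)/f_{\alpha_{d}}(z)$, identity \eqref{HB11} rewrites as
\[w(z)-1 = -(d+1)\sum_{n\geq 1}\frac{z^{d+1}}{j_{\alpha_{d},n}^{d+1}-z^{d+1}},\]
so that for $|z| \leq r < j_{\alpha_{d},1}$ the triangle inequality yields
\[|w(z)-1| \leq (d+1)\sum_{n\geq 1}\frac{r^{d+1}}{j_{\alpha_{d},n}^{d+1}-r^{d+1}} = -\frac{r\mathcal{J}_{\alpha_{d}}'(r)}{\mathcal{J}_{\alpha_{d}}(r)}.\]

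For the denominator, I would write $A-Bw = (A-B) - B(w-1)$, from which $|A-Bw| \geq (A-B) - |B|\,|w-1|$ in the range where this right-hand side is positive. Combining,
\[\left|\frac{w(z)-1}{A-Bw(z)}\right| \leq \frac{|w(z)-1|}{(A-B)-|B|\,|w(z)-1|},\]
and since the upper bound is strictly increasing in $|w(z)-1|$, substituting the bound from the first step and demanding the result to be less than $1$ collapses to the single scalar inequality $-(1+|B|)r\mathcal{J}_{\alpha_{d}}'(r)/\mathcal{J}_{\alpha_{d}}(r) < A-B$, equivalently
\[\frac{r\mathcal{J}_{\alpha_{d}}'(r)}{\mathcal{J}_{\alpha_{d}}(r)} + \frac{A-B}{1+|B|} > 0.\]

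To identify the critical $r$, I would introduce $\phi(r) := r\mathcal{J}_{\alpha_{d}}'(r)/\mathcal{J}_{\alpha_{d}}(r) + (A-B)/(1+|B|)$ on $(0, j_{\alpha_{d},1})$. Using the series identity $r\mathcal{J}_{\alpha_{d}}'(r)/\mathcal{J}_{\alpha_{d}}(r) = -(d+1)\sum_{n\geq 1} r^{d+1}/(j_{\alpha_{d},n}^{d+1}-r^{d+1})$, the function $\phi$ is continuous and strictly decreasing, with $\phi(0^+) = (A-B)/(1+|B|) > 0$ and $\phi(r) \to -\infty$ as $r \nearrow j_{\alpha_{d},1}$. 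The Intermediate Value Theorem then furnishes a unique root $r^{\ast} \in (0, j_{\alpha_{d},1})$, namely the smallest positive root of the equation in the statement.

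The main obstacle is sharpness: one must verify that $r^{\ast}$ is actually the radius $r^{\star}_{A,B}(f_{\alpha_{d}})$ and not merely a lower bound produced by the two triangle inequalities. I would inspect $z = r^{\ast}$ on the positive real axis, where $w(r^{\ast})-1 = -(A-B)/(1+|B|)$; the first triangle inequality is saturated there, and for $B \leq 0$ the vector $-B(w-1)$ is aligned in argument with the positive real number $A-B$, so the second inequality is also saturated, forcing the Janowski ratio to equal exactly $1$ at $z=r^{\ast}$. The case $B > 0$ would require examining instead the rotated extremal point $z = r^{\ast}e^{i\pi/(d+1)}$, where $w-1$ becomes positive real and the roles of the two triangle inequalities realign. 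Modulo this extremal verification, the argument is essentially identical to the two preceding proofs.
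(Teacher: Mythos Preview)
Your approach is essentially the paper's: the same logarithmic-derivative identity \eqref{HB11}, the same pair of triangle inequalities on numerator and denominator, and the same monotonicity/IVT argument for uniqueness of the root in $(0,j_{\alpha_d,1})$. The paper handles sharpness more cursorily than you do, simply asserting ``with the equality at $z=|z|=r$'' without splitting on the sign of $B$; your added discussion of the extremal point is in fact more careful than what the paper provides.

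That said, your proposed fix for $B>0$ does not work. At $z=r^{\ast}e^{i\pi/(d+1)}$ one has $z^{d+1}=-r^{\ast\,d+1}$, so
\[
w(z)-1=(d+1)\sum_{n\geq1}\frac{r^{\ast\,d+1}}{j_{\alpha_d,n}^{d+1}+r^{\ast\,d+1}},
\]
which is indeed positive real, but \emph{strictly smaller} than $(d+1)\sum_{n\geq1} r^{\ast\,d+1}/(j_{\alpha_d,n}^{d+1}-r^{\ast\,d+1})=(A-B)/(1+|B|)$ because the denominators have grown. Consequently the Janowski ratio at this rotated point is strictly less than $1$, and the point does not witness failure on the circle $|z|=r^{\ast}$. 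Thus for $B>0$ your argument (like the paper's, whose equality claim at $z=r$ is actually only valid when $B\le 0$) yields only $r^{\star}_{A,B}(f_{\alpha_d})\ge r^{\ast}$; the reverse inequality remains unproved in both.
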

\begin{proof}
In order to determine the radius of Janowski starlikeness of the normalization $f_{\alpha_{d}}(z)$ of $\mathcal{J}_{\alpha_{d}}(z),$ we need to find a real number $r^{\star}$ such that
\[\left|\frac{\frac{zf_{\alpha_{d}}'(z)}{f_{\alpha_{d}}(z)}-1 }{A-B\frac{zf_{\alpha_{d}}'(z)}{f_{\alpha_{d}}(z)}} \right|<1  \quad (\left| z\right|<r^{\star} ).\]
In light of Eq. \eqref{HB11} and by using triangle inequality it follows that the inequality
\[\left|\frac{\frac{zf_{\alpha_{d}}'(z)}{f_{\alpha_{d}}(z)}-1 }{A-B\frac{zf_{\alpha_{d}}'(z)}{f_{\alpha_{d}}(z)}} \right|\leq\frac{(d+1)\sum\limits_{n\geq1}\frac{\left| z\right| ^{d+1}}{j_{\alpha_{d},n}^{d+1}-\left| z\right| ^{d+1}}}{A-B-\left| B\right|(d+1) \sum\limits_{n\geq1}\frac{\left| z\right| ^{d+1}}{j_{\alpha_{d},n}^{d+1}-\left| z\right| ^{d+1}}} \quad (\left| z\right| <j_{\alpha_{d},1})\]
holds for $\alpha_{i}>-1,$  $i\in \left\lbrace 1,2,\dots,d\right\rbrace $ with the equality at $z=\left| z\right| =r.$ With  the aid of Eq. \eqref{HB11} the above inequality yields
\begin{equation}\label{HB17}
\left|\frac{\frac{zf_{\alpha_{d}}'(z)}{f_{\alpha_{d}}(z)}-1 }{A-B\frac{zf_{\alpha_{d}}'(z)}{f_{\alpha_{d}}(z)}} \right|\leq \frac{1-\frac{\left| z\right| f_{\alpha_{d}}'(\left| z\right| )}{f_{\alpha_{d}}(\left| z\right| )}}{A-B+\left| B\right| \left( \frac{\left| z\right| f_{\alpha_{d}}'(\left| z\right| )}{f_{\alpha_{d}}(\left| z\right| )}-1\right) }.
\end{equation}
Hence we deduce that for $\left| z\right| <r^{\star},$ Janowski starlike radius   $r^{\star}_{A,B}(f_{\alpha_{d}})$ is the unique positive root of the equation
\[\frac{1-\frac{r f_{\alpha_{d}}'(r)}{f_{\alpha_{d}}(r)}}{A-B+\left| B\right| \left( \frac{r f_{\alpha_{d}}'(r)}{f_{\alpha_{d}}(r)}-1\right) }-1=0,\]
which implies
\begin{equation}\label{HB18}
\frac{rf'_{\alpha_{d}(r)}}{f_{\alpha_{d}(r)}}=1-\frac{A-B}{1+\left| B\right| }.
\end{equation}
 We need to show that  the above equation (that is Eq. \eqref{HB18})  has a unique root in $\left(0,j_{\alpha_{d},1}\right).$ Let us consider the function $u_{\alpha_{d}}:\left(0,j_{\alpha_{d},1} \right)\rightarrow\mathbb{R}$ defined by
 \[u_{\alpha_{d}}(r)=\frac{rf'_{\alpha_{d}(r)}}{f_{\alpha_{d}(r)}}-1+\frac{A-B}{1+\left| B\right| }.\]
 It is clear that the above mentioned function is continuous and strictly decreasing, since
 \[u_{\alpha_{d}}'(r)=-(d+1)^2\sum_{n\geq1}\frac{r^{d}j_{\alpha_{d},n}^{d+1}}{\left(j_{\alpha_{d},n}^{d+1}-r^{d+1} \right)^2 }<0.\]
 Observe also that
 \[\lim_{r\searrow 0}u_{\alpha_{d}}(r)=\frac{A-B}{1+\left| B\right| }>0 \text{ \ and \ } \lim_{r\nearrow j_{\alpha_{d},1}}u_{\alpha_{d}}(r)=-\infty.\]
 Therefore, the Intermediate Value Theorem ensures the existence of the unique root of $u_{\alpha_{d}}(r)=0$ in $\left( 0,j_{\alpha_{d},1}\right).$ That is, the Janowski starlikeness radius $r^{\star}_{A,B}(f_{\alpha_{d}})$ is the unique root of equation \eqref{HB18} in $\left( 0,j_{\alpha_{d},1}\right).$
\end{proof}
\begin{theorem}
Let $\alpha_{i}>-1$ for $i\in \left\lbrace 1,2,\dots,d\right\rbrace .$ Then the Janowski convexity radius $r^{c}_{A,B}(f_{\alpha_{d}})$ is the smallest positive root of the equation
\[\frac{r^{2}\mathcal{J}_{\alpha_{d}}''(r)+2r\mathcal{J}_{\alpha_{d}}'(r)}{\mathcal{J}_{\alpha_{d}}(r)+r\mathcal{J}_{\alpha_{d}}'(r)}+\frac{A-B}{1+\left| B\right| }=0.\]
\end{theorem}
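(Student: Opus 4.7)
The plan is to mimic the Janowski starlikeness argument but with the factor $zf_{\alpha_d}'(z)/f_{\alpha_d}(z)$ replaced by $1+zf_{\alpha_d}''(z)/f_{\alpha_d}'(z)$, exploiting the product expansion \eqref{HB14} exactly as the previous theorem exploited \eqref{HB9}. Specifically, from \eqref{HB15} I have
\[
\frac{zf_{\alpha_d}''(z)}{f_{\alpha_d}'(z)}=-(d+1)\sum_{n\ge1}\frac{z^{d+1}}{\gamma_{\alpha_d,n}^{d+1}-z^{d+1}},
\]
so for $|z|<\gamma_{\alpha_d,1}$ the triangle inequality applied to numerator and denominator of the Janowski quantity $\bigl|\frac{zf_{\alpha_d}''/f_{\alpha_d}'}{A-B(1+zf_{\alpha_d}''/f_{\alpha_d}')}\bigr|$ yields the upper bound
\[
\frac{(d+1)\sum\limits_{n\ge1}\frac{|z|^{d+1}}{\gamma_{\alpha_d,n}^{d+1}-|z|^{d+1}}}{A-B-|B|(d+1)\sum\limits_{n\ge1}\frac{|z|^{d+1}}{\gamma_{\alpha_d,n}^{d+1}-|z|^{d+1}}},
\]
with equality attained on the positive real axis (exactly as in the previous theorem). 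Rewriting this bound using \eqref{HB15} in the form $(d+1)\sum_{n\ge1}\frac{|z|^{d+1}}{\gamma_{\alpha_d,n}^{d+1}-|z|^{d+1}}=-\,|z|f_{\alpha_d}''(|z|)/f_{\alpha_d}'(|z|)$, the condition ``bound $<1$'' simplifies, after clearing denominators, to $\frac{rf_{\alpha_d}''(r)}{f_{\alpha_d}'(r)}+\frac{A-B}{1+|B|}>0$.

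Therefore the Janowski convexity radius is the smallest positive $r$ at which equality holds in the above, that is, the smallest positive root of
\[
\frac{rf_{\alpha_d}''(r)}{f_{\alpha_d}'(r)}+\frac{A-B}{1+|B|}=0.
\]
Substituting $f_{\alpha_d}(z)=z\mathcal{J}_{\alpha_d}(z)$, so that $f_{\alpha_d}'(z)=\mathcal{J}_{\alpha_d}(z)+z\mathcal{J}_{\alpha_d}'(z)$ and $f_{\alpha_d}''(z)=2\mathcal{J}_{\alpha_d}'(z)+z\mathcal{J}_{\alpha_d}''(z)$, turns this directly into the equation in the statement.

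To finish, I need uniqueness of the root in $(0,\gamma_{\alpha_d,1})$. I would introduce
\[
w_{\alpha_d}(r)=\frac{rf_{\alpha_d}''(r)}{f_{\alpha_d}'(r)}+\frac{A-B}{1+|B|}
\]
and differentiate term-by-term in the series representation from \eqref{HB15} to obtain
\[
w_{\alpha_d}'(r)=-(d+1)^2\sum_{n\ge1}\frac{r^{d}\gamma_{\alpha_d,n}^{d+1}}{(\gamma_{\alpha_d,n}^{d+1}-r^{d+1})^{2}}<0,
\]
so $w_{\alpha_d}$ is continuous and strictly decreasing on $(0,\gamma_{\alpha_d,1})$. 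Since $w_{\alpha_d}(0^+)=\frac{A-B}{1+|B|}>0$ and $w_{\alpha_d}(r)\to-\infty$ as $r\nearrow\gamma_{\alpha_d,1}$, the Intermediate Value Theorem yields exactly one zero in this interval, which is then the required radius.

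The only mildly delicate point is the passage from the triangle-inequality bound to the condition $w_{\alpha_d}(r)\ge 0$: one must verify that the denominator $A-B-|B|(d+1)\sum\frac{|z|^{d+1}}{\gamma_{\alpha_d,n}^{d+1}-|z|^{d+1}}$ stays positive on the relevant subinterval, which follows because at the candidate radius the equality $\frac{rf_{\alpha_d}''(r)}{f_{\alpha_d}'(r)}=-\frac{A-B}{1+|B|}$ forces the denominator to equal $(A-B)\cdot\frac{1}{1+|B|}>0$, and strict monotonicity of $w_{\alpha_d}$ propagates positivity to smaller $r$. Everything else is a direct translation of the Janowski starlikeness argument.
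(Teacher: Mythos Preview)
Your proposal is correct and follows essentially the same route as the paper: it uses \eqref{HB15} and the triangle inequality to bound the Janowski quotient, identifies the critical equation $\frac{rf_{\alpha_d}''(r)}{f_{\alpha_d}'(r)}+\frac{A-B}{1+|B|}=0$, and proves uniqueness of the root in $(0,\gamma_{\alpha_d,1})$ via the same monotonicity computation (your $w_{\alpha_d}$ is the paper's $v_{\alpha_d}$). Your extra remark checking positivity of the denominator is a small refinement that the paper leaves implicit.
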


\begin{proof}
In order that the function $f_{\alpha_{d}}$ is Janowski convex in the disk $\left\lbrace z: \left| z\right| <r \right\rbrace,$ the inequality
\[\left|\frac{\frac{zf_{\alpha_{d}}''(z)}{f_{\alpha_{d}}'(z)} }{A-B\left( 1+\frac{zf_{\alpha_{d}}''(z)}{f_{\alpha_{d}}'(z)}\right) }\right| <1\]
must be valid for $\left| z\right| <r.$
It is easily seen that the function $f_{\alpha_{d}}$ satisfies the inequality
\[\left|\frac{\frac{zf_{\alpha_{d}}''(z)}{f_{\alpha_{d}}'(z)} }{A-B\left( 1+\frac{zf_{\alpha_{d}}''(z)}{f_{\alpha_{d}}'(z)}\right) }\right|\leq \frac{(d+1)\sum\limits_{n\geq1}\frac{z^{d+1}}{\gamma_{\alpha_{d},n}^{d+1}-z^{d+1}}}{A-B-\left| B\right| \left((d+1)\sum\limits_{n\geq1}\frac{z^{d+1}}{\gamma_{\alpha_{d},n}^{d+1}-z^{d+1}}\right)}\]
for $\left| z\right| <\gamma_{\alpha_{d},1}$ with the equality at $z=\left| z\right|=r.$ The above inequality implies that
\begin{equation}\label{HB19}
\left|\frac{\frac{zf_{\alpha_{d}}''(z)}{f_{\alpha_{d}}'(z)} }{A-B\left( 1+\frac{zf_{\alpha_{d}}''(z)}{f_{\alpha_{d}}'(z)}\right) }\right|\leq \frac{-\frac{\left| z\right| f_{\alpha_{d}}''(\left| z\right| )}{f_{\alpha_{d}}'(\left| z\right| )} }{A-B+\left| B\right|\frac{\left| z\right| f_{\alpha_{d}}''(\left| z\right| )}{f_{\alpha_{d}}'(\left| z\right| )} }.
\end{equation}
In this case, we say that the Janowski convexity radius $r^{c}_{A,B}(f_{\alpha_{d}})$ is the smallest positive root of the equation
\begin{equation}\label{HB20}
\frac{rf_{\alpha_{d}}''(r)}{f_{\alpha_{d}}'(r)}+\frac{A-B}{1+\left| B\right| }=0.
\end{equation}
In order to finish the proof, we must show that the above mentioned equation (that is \eqref{HB20}) has a unique root in $\left(0,\gamma_{\alpha_{d},1} \right). $ To reach our aim, we consider the function $v_{\alpha_{d}}:\left(0,\gamma_{\alpha_{d},1} \right)\rightarrow \mathbb{R}$ defined by
\[v_{\alpha_{d}}(r)=\frac{rf_{\alpha_{d}}''(r)}{f_{\alpha_{d}}'(r)}+\frac{A-B}{1+\left| B\right| }.\]
It is obvious that the function $v_{\alpha_{d}}$ is strictly decreasing as
\[v_{\alpha_{d}}'(r)=-(d+1)^{2}\sum_{n\geq1}\frac{r^{d}\gamma_{\alpha_{d},n}^{d+1}}{\left(\gamma_{\alpha_{d},n}^{d+1}-r^{d+1} \right)^2 }<0.\]
Observe also that
\[\lim_{r\searrow 0}v_{\alpha_{d}}(r)=\frac{A-B}{1+\left| B\right| }>0 \text{ \ and \ } \lim_{r\nearrow \gamma_{\alpha_{d},1}}v_{\alpha_{d}}(r)=-\infty.\]
Therefore, by monotonicity of the function $v_{\alpha_{d}},$ it is obvious that the function $f_{\alpha_{d}}$ is Janowski convex for $\left| z\right| <r_{1},$ where $r_{1}$ is the unique positive root of equation \eqref{HB20} in $\left(0,\gamma_{\alpha_{d},1} \right).$
\end{proof}


\begin{thebibliography}{width}
	\bibitem{AB}
	Akta\c{s} \.{I}. and Baricz \'A., Bounds for radii of starlikeness of some $q-$Bessel functions. Results Math 2017; \textbf{72}(1): 947--963.
	
	\bibitem{ABS}
	Akta\c{s} \.{I}., Baricz \'A. and Singh S., Geometric and monotonic properties of hyper-Bessel functions, {\it Ramanujan J.}, 1--21, 2019, doi: 10.1007/s11139-018-0105-9
	
	\bibitem{AJR}
	Ali R. M., Jain N. K. and Ravichandran V., Radii of starlikeness associated with the lemniscate of Bernoulli and the left-half plane, {\it Appl. Math. Comput.}, \textbf{218} (2012), no. 11, 6557–6565.
	
	\bibitem{AP}
	Baricz \'A. and Prajapati A. Radii of starlikeness and convexity of generalized Mittag-Leffler functions. arXiv preprint \href{https://arxiv.org/abs/1901.04333}{arXiv:1901.04333} (2019)
	
	\bibitem{BTK}
	Baricz \'A., Toklu E. and Kad{\i}o\u{g}lu E., Radii of starlikeness and convexity of Wright functions. {\it Math. Commun.}, 23: 97--117, 2018.
		
	\bibitem{CR}
	Chaggara  H., Romdhane N.B., On the zeros of the hyper-Bessel function, {\it Integr. Transf. Spec. Funct.}, \textbf{26}(2), 96--101, 2015.
	
	\bibitem{Goodman}
	Goodman A. W. , {\it Univalent functions. Vol. I}, Mariner Publishing Co., Inc., Tampa, FL, 1983.
	
	\bibitem{Janowski}
	Janowski W., Extremal problems for a family of functions with positive real part and for some related families, {\it Ann. Polon. Math.}, \textbf{23} (1970/1971), 159–177.
	
	\bibitem{MKR1}
	Madaan V., Kumar A., and Ravichandran V., Lemniscate Convexity and Other Properties of
	Generalized Bessel Functions. arXiv preprint \href{https://arxiv.org/abs/1902.04277}{arXiv:1902.04277} (2019).
	
	\bibitem{MKR2}
	Madaan V., Kumar A., and Ravichandran V., Radii of starlikeness and convexity of Bessel functions. arXiv preprint \href{https://arxiv.org/abs/1906.05547}{arXiv:1906.05547} (2019).
	
	\bibitem{SS}
	Sok\'ol J. and Stankiewicz J., Radius of convexity of some subclasses of strongly starlike functions, {\it Zeszyty Nauk. Politech. Rzeszowskiej Mat.}, No. \textbf{19} (1996), 101–105.
	
	\bibitem{TAO}
	Toklu E., Akta\c{s} \.{I}. and Orhan H., Radii problems for normalized $q-$Bessel and Wright functions. {\it Acta Univ Sapientiae Mathematica}, \textbf{11}(1): 193--213, 2019.
	
	\bibitem{VR}
	Verma S. and Ravichandran V., Radius problems for ratios of Janowski starlike functions with their derivatives, {\it Bull. Malays. Math. Sci. Soc.}, \textbf{40} (2017), no. 2, 819–840.
		
		
\end{thebibliography}
\end{document}